\definecolor{webgreen}{rgb}{0,.5,0}
\definecolor{webbrown}{rgb}{.6,0,0}
\tikzset{circle node/.style = {circle,inner sep=1pt,draw, fill=white},
        X node/.style = {fill=white, inner sep=1pt},
        dot node/.style = {circle, draw, inner sep=5pt}
        }
\newtheorem{theorem}{Theorem}
\newtheorem{proposition}[theorem]{Proposition}
\theoremstyle{definition}
\newtheorem{example}[theorem]{Example}
\newcommand{\seqnum}[1]{\href{http://oeis.org/#1}{\underline{#1}}}
\begin{document}

\begin{center}
\vskip 1cm{\LARGE\bf On the $f$-Matrices of Pascal-like Triangles Defined by Riordan Arrays} \vskip 1cm \large
Paul Barry\\
School of Science\\
Waterford Institute of Technology\\
Ireland\\
\href{mailto:pbarry@wit.ie}{\tt pbarry@wit.ie}
\end{center}
\vskip .2 in

\begin{abstract} We define and characterize the $f$-matrices associated to Pascal-like matrices that are defined by ordinary and exponential Riordan arrays. These generalize the face matrices of simplices and hypercubes. Their generating functions can be expressed simply in terms of continued fractions, which are shown to be transformations of the generating functions of the corresponding $\gamma$- and $h$-matrices.  \end{abstract}

\section{Introduction}
Three special classes of regular polytope \cite{Ziegler} exist in every dimension: the regular simplex, the hypercube, and the cross-polytope. For each of these, it is usual to construct a lower triangular matrix that enumerates, for each dimension, the number of faces of each lower dimension of the polytope in question.

For the regular simplex, we obtain the (infinite) matrix \seqnum{A135278} that begins
$$\left(
\begin{array}{ccccccc}
 1 & 0 & 0 & 0 & 0 & 0 & 0 \\
 2 & 1 & 0 & 0 & 0 & 0 & 0 \\
 3 & 3 & 1 & 0 & 0 & 0 & 0 \\
 4 & 6 & 4 & 1 & 0 & 0 & 0 \\
 5 & 10 & 10 & 5 & 1 & 0 & 0 \\
 6 & 15 & 20 & 15 & 6 & 1 & 0 \\
 7 & 21 & 35 & 35 & 21 & 7 & 1 \\
\end{array}
\right),$$
or alternatively its reversal \seqnum{A074909}
$$\left(
\begin{array}{ccccccc}
 1 & 0 & 0 & 0 & 0 & 0 & 0 \\
 1 & 2 & 0 & 0 & 0 & 0 & 0 \\
 1 & 3 & 3 & 0 & 0 & 0 & 0 \\
 1 & 4 & 6 & 4 & 0 & 0 & 0 \\
 1 & 5 & 10 & 10 & 5 & 0 & 0 \\
 1 & 6 & 15 & 20 & 15 & 6 & 0 \\
 1 & 7 & 21 & 35 & 35 & 21 & 7 \\
\end{array}
\right),$$
(depending on convention). 

The first matrix is the ordinary Riordan array $\left(\frac{1}{(1-x)^2}, \frac{x}{1-x}\right)$, while the second matrix is its reversal.

For the case of the hypercube, the corresponding matrix \seqnum{A038207} and its reversal \seqnum{A013609} begin
$$\left(
\begin{array}{ccccccc}
 1 & 0 & 0 & 0 & 0 & 0 & 0 \\
 2 & 1 & 0 & 0 & 0 & 0 & 0 \\
 4 & 4 & 1 & 0 & 0 & 0 & 0 \\
 8 & 12 & 6 & 1 & 0 & 0 & 0 \\
 16 & 32 & 24 & 8 & 1 & 0 & 0 \\
 32 & 80 & 80 & 40 & 10 & 1 & 0 \\
 64 & 192 & 240 & 160 & 60 & 12 & 1 \\
\end{array}
\right)$$
and
$$\left(
\begin{array}{ccccccc}
 1 & 0 & 0 & 0 & 0 & 0 & 0 \\
 1 & 2 & 0 & 0 & 0 & 0 & 0 \\
 1 & 4 & 4 & 0 & 0 & 0 & 0 \\
 1 & 6 & 12 & 8 & 0 & 0 & 0 \\
 1 & 8 & 24 & 32 & 16 & 0 & 0 \\
 1 & 10 & 40 & 80 & 80 & 32 & 0 \\
 1 & 12 & 60 & 160 & 240 & 192 & 64 \\
\end{array}
\right).$$

These are the ordinary Riordan array $\left(\frac{1}{1-2x}, \frac{x}{1-2x}\right)$ and its reversal. We have that $$\left(\frac{1}{1-2x}, \frac{x}{1-2x}\right)=\mathbf{B}^2,$$ where $\mathbf{B}=\left(\frac{1}{1-x}, \frac{x}{1-x}\right)$  is the binomial matrix $\left(\binom{n}{k}\right)_{n,k \ge 0}$ (Pascal's triangle \seqnum{A007318}). For our purposes in this note, we can also regard the binomial matrix as the exponential Riordan array
$$\mathbf{B}=\left[e^x, x\right],$$ in which case the face matrix for the hypercubes is given by
$\mathbf{B}^2=\left[e^{2x}, x\right]$ (or its reversal).

We now note the following.

$$ \left(\frac{1}{(1-x)^2}, \frac{x}{1-x}\right)\cdot \mathbf{B}^{-1}=\left(\frac{1}{1-x},x\right),$$ is the matrix that begins
$$\left(
\begin{array}{ccccccc}
 1 & 0 & 0 & 0 & 0 & 0 & 0 \\
 1 & 1 & 0 & 0 & 0 & 0 & 0 \\
 1 & 1 & 1 & 0 & 0 & 0 & 0 \\
 1 & 1 & 1 & 1 & 0 & 0 & 0 \\
 1 & 1 & 1 & 1 & 1 & 0 & 0 \\
 1 & 1 & 1 & 1 & 1 & 1 & 0 \\
 1 & 1 & 1 & 1 & 1 & 1 & 1 \\
\end{array}
\right).$$
Similarly, we have
$$ \left[e^{2x}, x\right] \mathbf{B}^{-1}=\left[e^x,x\right]=\mathbf{B},$$ which begins
$$\left(
\begin{array}{ccccccc}
 1 & 0 & 0 & 0 & 0 & 0 & 0 \\
 1 & 1 & 0 & 0 & 0 & 0 & 0 \\
 1 & 2 & 1 & 0 & 0 & 0 & 0 \\
 1 & 3 & 3 & 1 & 0 & 0 & 0 \\
 1 & 4 & 6 & 4 & 1 & 0 & 0 \\
 1 & 5 & 10 & 10 & 5 & 1 & 0 \\
 1 & 6 & 15 & 20 & 15 & 6 & 1 \\
\end{array}
\right).$$
In both cases, we obtain centrally symmetric, or palindromic,  matrices, whose rows are the $h$-vectors of the polytopes in question. We shall call a lower-triangular matrix $(a_{n,k})_{n,k\ge 0}$ \emph{Pascal-like} if $a_{n,0}=a_{n,n}=1$ and $a_{n,n-k}=a_{n,k}$. If $M$ is a Pascal-like matrix, then the matrix given by the matrix product $M \cdot \mathbf{B}$, where $\mathbf{B}$ is the binomial matrix $(\binom{n}{k})$, will be callde the $f$-matrix (face matrix) of $M$. 

We can generalize the two Pascal-like matrices above using Riordan arrays in two ways. The first way is to use ordinary Riordan arrays, in which case we obtain the parameterized family given by \cite{Cons}
$$\left(\frac{1}{1-x},\frac{x(1+rx)}{1-x}\right),$$ where for instance $r=0$ corresponds to the binomial matrix $\mathbf{B}$. The second way is to use exponential Riordan arrays \cite{Exp}, where we obtain the parameterized family palindromic matrices given by
$$\left[e^x, x\left(1+\frac{rx}{2}\right)\right].$$

We have investigated the associated $\gamma$-matrices for these two families in a previous paper \cite{gamma}.

We shall now turn our attention to the associated $f$-matrices.

In the next section, we shall briefly cover some definitions and results that will provide the context of the rest of the paper.

\section{Relevant definitions and results}
An ordinary Riordan array \cite{Book, Survey, SGWW} is a lower-triangular invertible matrix whose elements $a_{n,k}$ are given by
$$a_{n,k}=[x^n] g(x)f(x)^k,$$ where $g(x)=1+g_1 x+ g_2 x^2+ \cdots$ and $f(x)=x+f_2 x^2+ f_3 x^3+\cdots$ are two power series, with coefficients drawn from a suitable ring. In our case this ring will be the ring of integers $\mathbb{Z}$. This array is denoted by $(g(x), f(x))=(g, f)$, where $x$ is a dummy variable, in the sense that
$$a_{n,k}=[x^n]g(x)f(x)^k=[u^n]g(u)f(u)^k.$$
The bivariate generating function of the array $(g,f)$ is given by
$$\frac{g(x)}{1-y f(x)}.$$
Such arrays form a group (the Riordan group), where the product is given by
$$(g(x), f(x))\cdot (u(x), v(x))= (g(x)u(f(x)), v(f(x)),$$ and we have
$$(g(x), f(x))^{-1}=\left(\frac{1}{g(\bar{f}(x))}, \bar{f}(x)\right),$$ where $\bar{f}(x)$ is the compositional inverse of $f(x)$. Thus $\bar{f}(x)$ is the solution $u$ to the equation $f(u)=x$ such that $u(0)=0$.

An exponential Riordan array \cite{Book, Riordan_Exp} is a lower-triangular invertible matrix whose elements $a_{n,k}$ are given by
$$a_{n,k}=\frac{n!}{k!}[x^n] g(x)f(x)^k,$$ where $g(x)=1+g_1 \frac{x}{1!}+ g_2 \frac{x^2}{2!}+ \cdots$ and $f(x)=\frac{x}{1!}+f_2 \frac{x^2}{2!}+ f_3 \frac{x^3}{3!}+\cdots$ are two (exponential) power series, with coefficients drawn from a suitable ring. This array is denoted by $[g(x), f(x)]=[g, f]$.
The product rule and the inverse of an exponential Riordan array are calculated in a similar fashion to the ordinary case.

The bivariate generating function of the matrix $[g(x), f(x)]$ is given by
$$g(x)e^{yf(x)}.$$

These two variants are specializations of the case of so-called ``generalized Riordan arrays'' \cite{Wang}, which are defined in terms of two power series $g(x)=1+g_1 \frac{x}{c_1}+ g_2 \frac{x^2}{c_2}+ \cdots$ and
$f(x)=\frac{x}{c_1}+f_2 \frac{x^2}{c_2}+ f_3 \frac{x^3}{c_3}+\cdots$ where $c_n$ is a suitable sequence of non-zero coefficients. In this case, we have
$$a_{n,k}= \frac{c_n}{c_k} [x^n] g(x)f(x)^k.$$
We denote this array by $[g(x), f(x)]_{c_n}$.

\begin{example} The triangle of Narayana numbers $N_{n,k}=\frac{1}{k+1}\binom{n}{k}\binom{n+1}{k}$ \seqnum{A001263} is the matrix of $h$-vectors for the associahedron. This matrix is given by the generalized Riordan array
$$\left[\frac{I_1(2 \sqrt{x})}{\sqrt{x}}, x\right]_{n!(n+1)!}.$$ (Observation by Peter Bala, \seqnum{A001263}).
\end{example}

A Jacobi continued fraction is a continued fraction \cite{Wall} of the form
$$\cfrac{1}{1-\alpha x -
\cfrac{\beta x^2}{1-\gamma x -
\cfrac{\delta x^2}{1-\cdots}}}.$$ We use the notation
$$\mathcal{J}(\alpha, \gamma, \ldots; \beta, \delta, \ldots)$$ for such a fraction. The $k$-th binomial transform of such a continued fraction is then given by \cite{CFT}
$$\mathcal{J}(\alpha+k, \gamma+k, \ldots; \beta, \delta, \ldots).$$ If $a_n$ is a sequence, then its $k$-th binomial transform is the sequence $b_n=\sum_{i=0}^n \binom{n}{i}k^{n-i} a_i$. 

Sequences in this note, where known, will be referenced by their $Annnnnn$ number from the On-Line Encyclopedia of Integer Sequences \cite{SL1, SL2}. All the lower-triangular matrices that we shall encounter are infinite in extent. We display a suitable truncation. 

\section{The $f$-matrix of $\left(\frac{1}{1-x}, \frac{x(1+rx)}{1-x}\right)$}
We have that the $f$-matrix of the Pascal-like array $\left(\frac{1}{1-x}, \frac{x(1+rx)}{1-x}\right)$ is given by
$$F_r=\left(\frac{1}{1-x}, \frac{x(1+rx)}{1-x}\right)\cdot \mathbf{B}=\left(\frac{1}{1-x}, \frac{x(1+rx)}{1-x}\right)\cdot \left(\frac{1}{1-x}, \frac{x}{1-x}\right).$$
This is equal to
$$F_r=\left(\frac{1}{1-2x-rx^2},\frac{x(1+rx)}{1-2x-rx^2}\right).$$
This matrix begins
$$\left(
\begin{array}{cccccc}
 1 & 0 & 0 & 0 & 0 & 0 \\
 2 & 1 & 0 & 0 & 0 & 0 \\
 r+4 & r+4 & 1 & 0 & 0 & 0 \\
 4 r+8 & 6 r+12 & 2 r+6 & 1 & 0 & 0 \\
 r^2+12 r+16 & 2 r^2+24 r+32 & r^2+15 r+24 & 3 r+8 & 1 & 0 \\
 6 r^2+32 r+32 & 15 r^2+80 r+80 & 12 r^2+72 r+80 & 3 r^2+28 r+40 & 4 r+10 & 1 \\
\end{array}
\right),$$ or in reversed form,
$$\left(
\begin{array}{cccccc}
 1 & 0 & 0 & 0 & 0 & 0 \\
 1 & 2 & 0 & 0 & 0 & 0 \\
 1 & r+4 & r+4 & 0 & 0 & 0 \\
 1 & 2 r+6 & 6 r+12 & 4 r+8 & 0 & 0 \\
 1 & 3 r+8 & r^2+15 r+24 & 2 r^2+24 r+32 & r^2+12 r+16 & 0 \\
 1 & 4 r+10 & 3 r^2+28 r+40 & 12 r^2+72 r+80 & 15 r^2+80 r+80 & 6 r^2+32 r+32 \\
\end{array}
\right).$$
For $r=0,1,2$ we get, respectively,
$$\left(
\begin{array}{cccccc}
 1 & 0 & 0 & 0 & 0 & 0 \\
 1 & 2 & 0 & 0 & 0 & 0 \\
 1 & 4 & 4 & 0 & 0 & 0 \\
 1 & 6 & 12 & 8 & 0 & 0 \\
 1 & 8 & 24 & 32 & 16 & 0 \\
 1 & 10 & 40 & 80 & 80 & 32 \\
\end{array}
\right), \left(
\begin{array}{cccccc}
 1 & 0 & 0 & 0 & 0 & 0 \\
 1 & 2 & 0 & 0 & 0 & 0 \\
 1 & 5 & 4 & 0 & 0 & 0 \\
 1 & 8 & 17 & 8 & 0 & 0 \\
 1 & 11 & 39 & 51 & 16 & 0 \\
 1 & 14 & 70 & 154 & 143 & 32 \\
\end{array}
\right),$$ and
$$\left(
\begin{array}{cccccc}
 1 & 0 & 0 & 0 & 0 & 0 \\
 1 & 2 & 0 & 0 & 0 & 0 \\
 1 & 6 & 4 & 0 & 0 & 0 \\
 1 & 10 & 22 & 8 & 0 & 0 \\
 1 & 14 & 56 & 72 & 16 & 0 \\
 1 & 18 & 106 & 248 & 220 & 32 \\
\end{array}
\right).$$ The case $r=0$ is that of the hypercube. 

Using the form of the bivariate generating function for a Riordan array, we have the following result. 
\begin{proposition} The bivariate generating function for the $f$-matrix of the Pascal-like array $\left(\frac{1}{1-x},\frac{x(1+rx)}{1-x}\right)$ is given by
$$\frac{1}{1-(y+2)x-r(y+1)x^2},$$  or in reversed form,
$$\frac{1}{1-(2y+1)x-ry(y+1)x^2}.$$
\end{proposition}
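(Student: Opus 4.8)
The plan is to apply directly the formula quoted in Section~2 for the bivariate generating function of an ordinary Riordan array, namely that $(g,f)$ has bivariate generating function $\frac{g(x)}{1-yf(x)}$, to the explicit Riordan-array expression for $F_r$ already obtained above, that is, $F_r=\left(\frac{1}{1-2x-rx^2},\frac{x(1+rx)}{1-2x-rx^2}\right)$. (This closed form was itself derived from the Riordan product rule $(g_1,f_1)\cdot(g_2,f_2)=(g_1\cdot(g_2\circ f_1),\,f_2\circ f_1)$, so I may take it as given.)

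First I would substitute $g(x)=\frac{1}{1-2x-rx^2}$ and $f(x)=\frac{x(1+rx)}{1-2x-rx^2}$ into $\frac{g(x)}{1-yf(x)}$. Multiplying numerator and denominator through by the common factor $1-2x-rx^2$, the expression collapses to $\frac{1}{(1-2x-rx^2)-yx(1+rx)}$; expanding $yx(1+rx)=yx+ryx^2$ and collecting by powers of $x$ gives the denominator $1-(y+2)x-r(y+1)x^2$, which is the first claimed form.

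For the reversed form I would use the observation that reversing the rows of a lower-triangular matrix $(a_{n,k})$, producing $(a_{n,n-k})$, corresponds at the level of bivariate generating functions to the substitution $A(x,y)\mapsto A(xy,1/y)$, since $\sum_{n,k}a_{n,n-k}x^ny^k=\sum_{n,j}a_{n,j}(xy)^n(1/y)^j$. Applying this to $\frac{1}{1-(y+2)x-r(y+1)x^2}$ and simplifying the quadratic term via $r\bigl(\tfrac1y+1\bigr)(xy)^2=ry(1+y)x^2$, while $(y+2)x$ becomes $(1+2y)x$, yields the denominator $1-(2y+1)x-ry(y+1)x^2$, as claimed.

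There is essentially no obstacle here beyond bookkeeping: the whole argument is the substitution of two rational functions into a known formula followed by clearing denominators. The only steps that merit an explicit word of justification are the row-reversal substitution on generating functions and the use of the already-established closed Riordan form for $F_r$; everything else is routine algebra, and one can cross-check the low-order coefficients against the displayed truncations of $F_r$ and its reversal.
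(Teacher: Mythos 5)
Your proof is correct and follows exactly the route the paper intends: the paper justifies the proposition only by the remark ``Using the form of the bivariate generating function for a Riordan array,'' which is precisely your substitution of $g=\frac{1}{1-2x-rx^2}$ and $f=\frac{x(1+rx)}{1-2x-rx^2}$ into $\frac{g(x)}{1-yf(x)}$ followed by clearing the common denominator. Your explicit treatment of the row-reversal as the substitution $A(x,y)\mapsto A(xy,1/y)$ is a detail the paper leaves implicit, and it checks out.
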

Using the reversed form, we obtain the sequence of generating functions
$$\frac{1}{1-x-ryx^2} \to \frac{1}{1-(y+1)x-ryx^2} \to \frac{1}{1-(2y+1)x-ry(y+1)x^2}$$ for, respectively, the $\gamma$-matrix \cite{gamma}, the $h$-matrix, and the $f$-matrix for the Pascal-like matrix $\left(\frac{1}{1-x}, \frac{x(1+rx)}{1-x}\right)$ (where this matrix is the $h$-matrix).

We have, for the matrix family $\left(\frac{1}{1-x}, \frac{x(1+rx)}{1-x}\right)$, 
$$ \gamma_{n,k}=\binom{n-k}{n-2k}r^k,$$ 
$$ h_{n,k}=\sum_{j=0}^k \binom{k}{j}\binom{n-j}{n-k-j}r^j,$$ and 
$$ f_{n,k}=\sum_{i=0}^n \sum_{j=0}^i \binom{i}{j}\binom{n-j}{n-i-j}r^j \binom{i}{k}.$$ 
This follows from previous work \cite{gamma, Cons} and the definition of the $f$-matrix.

\section{The $f$-matrix of the Pascal-like matrix $\left[e^x, x(1+rx/2)\right]$}
The $f$-matrix of the Pascal-like matrix $\left[e^x, x(1+rx/2)\right]$ is given by
$$\left[e^x, x(1+rx/2)\right]\cdot \mathbf{B}=\left[e^x, x(1+rx/2)\right]\cdot \left[e^x, x\right],$$ which is
$$eF_r=\left[e^x e^{x(1+rx/2)}, x(1+rx/2)\right]=\left[e^{2x+rx^2/2}, x(1+rx/2)\right].$$
The bivariate generating function for $eF_r$ is then given by
$$e^x e^{x(1+rx/2)} e^{yx(1+rx/2)}=e^{2x+rx^2/2} e^{yx(1+rx/2)}.$$ This matrix begins
$$\left(
\begin{array}{ccccc}
 1 & 0 & 0 & 0 & 0 \\
 2 & 1 & 0 & 0 & 0 \\
 r+4 & r+4 & 1 & 0 & 0 \\
 6r+8 & 9r+12 & 3r+6 & 1 & 0 \\
 3 r^2+24 r+16 & 2 \left(3 r^2+24 r+16\right) & 3 \left(r^2+10 r+8\right) & 6r+8 & 1 \\
\end{array}
\right)$$ or in reversed form 
$$\left(
\begin{array}{ccccc}
 1 & 0 & 0 & 0 & 0 \\
 1 & 2 & 0 & 0 & 0 \\
 1 & r+4 & r+4 & 0 & 0 \\
 1 & 2r+6 & 9 r+12 & 6 r+8 & 0 \\
 1 & 6 r+8 & 3(r^2+10r+8) & 2(3r^2+24r+16) & 3 r (r+8)+16 \\
\end{array}
\right).$$ 
For $r=0,1,2$ we obtain the triangles 
$$\left(
\begin{array}{ccccc}
 1 & 0 & 0 & 0 & 0 \\
 1 & 2 & 0 & 0 & 0 \\
 1 & 4 & 4 & 0 & 0 \\
 1 & 6 & 12 & 8 & 0 \\
 1 & 8 & 24 & 32 & 16 \\
\end{array}
\right), \left(
\begin{array}{ccccc}
 1 & 0 & 0 & 0 & 0 \\
 1 & 2 & 0 & 0 & 0 \\
 1 & 5 & 5 & 0 & 0 \\
 1 & 9 & 21 & 14 & 0 \\
 1 & 14 & 57 & 86 & 43 \\
\end{array}
\right),$$ and 
$$\left(
\begin{array}{ccccc}
 1 & 0 & 0 & 0 & 0 \\
 1 & 2 & 0 & 0 & 0 \\
 1 & 6 & 6 & 0 & 0 \\
 1 & 12 & 30 & 20 & 0 \\
 1 & 20 & 96 & 152 & 76 \\
\end{array}
\right).$$ The case $r=0$ corresponds to \seqnum{A013609}. 

The reversed form of $eF_r$ will then have bivariate generating function
$$e^{2xy+rx^2y^2/2}e^{x(1+rxy/2)}.$$
We have that
$$e^{2xy+rx^2y^2/2}e^{x(1+rxy/2)}=e^{(2y+1)x}e^{ry(y+1)x^2/2}.$$
Now the exponential generating function $e^{\frac{x^2}{2}}$ expands to give the sequence of aerated double factorials \seqnum{A001147}
$$1, 0, 1, 0, 3, 0, 15, 0, 105, 0, 945,\ldots$$ which has the ordinary generating function
$$\cfrac{1}{1-
\cfrac{x^2}{1-
\cfrac{2x^2}{1-
\cfrac{3x^2}{1-\cdots}}}}.$$ This leads to the following proposition \cite{CFT}.
\begin{proposition} The ordinary generating function of the reversal of $F_r$ is given by the continued fraction $$\cfrac{1}{1-(2y+1)x-
\cfrac{ry(y+1)x^2}{1-(2y+1)x-
\cfrac{2ry(y+1)x^2}{1-(2y+1)x-
\cfrac{3ry(y+1)x^2}{1-\cdots}}}}.$$
\end{proposition}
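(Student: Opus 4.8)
The plan is to read the continued fraction directly off the reversed bivariate exponential generating function $e^{(2y+1)x}\,e^{ry(y+1)x^2/2}$ of $eF_r$ obtained just above the statement, by synthesising it from the $J$-fraction for $e^{x^2/2}$ recalled in the preceding discussion, using two elementary operations on sequences: a scaling of the variable $x$, and a binomial transform. Throughout I would regard $r$ and $y$ as formal parameters and work with power series in $x$ over $\mathbb{Q}[r,y]$, so that the ``ordinary generating function of the reversal of $F_r$'' means the bivariate series obtained from the reversal of $eF_r$ by reading its $x$-coefficients ordinarily rather than exponentially.

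First I would identify the sequence in $x$ whose exponential generating function is $e^{ry(y+1)x^2/2}$. Since $e^{bx^2/2}$ has $x^{2k}$-coefficient $(2k-1)!!\,b^k$ and vanishing odd coefficients, this sequence is $\bigl(\sqrt{ry(y+1)}\bigr)^{n}a_n$, where $a_n=1,0,1,0,3,0,15,\dots$ is the aerated double factorial sequence \seqnum{A001147} with exponential generating function $e^{x^2/2}$ and ordinary generating function $\mathcal{J}(0,0,\dots;1,2,3,\dots)$. Because replacing $x$ by $\lambda x$ in a Jacobi continued fraction $\mathcal{J}(\alpha,\gamma,\dots;\beta,\delta,\dots)$ produces $\mathcal{J}(\lambda\alpha,\lambda\gamma,\dots;\lambda^2\beta,\lambda^2\delta,\dots)$, and here every $\alpha$-parameter vanishes so that only $\lambda^2=ry(y+1)$ survives, the ordinary generating function of $\bigl(\sqrt{ry(y+1)}\bigr)^{n}a_n$ is $\mathcal{J}\bigl(0,0,\dots;\,ry(y+1),\,2ry(y+1),\,3ry(y+1),\dots\bigr)$.

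Next I would observe that multiplying an exponential generating function by $e^{(2y+1)x}$ is exactly the formation of the $(2y+1)$-st binomial transform of the underlying sequence, in the sense recalled in Section~2. Applying the rule of \cite{CFT} for the binomial transform of a Jacobi continued fraction — which adds the shift to each $\alpha$-parameter and leaves the $\beta$-parameters unchanged — converts the $J$-fraction of the previous paragraph into $\mathcal{J}\bigl(2y+1,\,2y+1,\dots;\,ry(y+1),\,2ry(y+1),\,3ry(y+1),\dots\bigr)$, whose expanded form is precisely the asserted continued fraction. Since the two operations together carry $e^{x^2/2}$ to $e^{(2y+1)x}e^{ry(y+1)x^2/2}$, this identifies that $J$-fraction with the ordinary generating function of the reversal of $eF_r$, completing the argument.

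The one point needing care — and the main, if modest, obstacle — is that the binomial-transform shift $k=2y+1$ is not a number but a polynomial. I would handle this by noting that the identity of \cite{CFT} relating the $k$-th binomial transform of $\mathcal{J}(\alpha,\gamma,\dots;\beta,\delta,\dots)$ to $\mathcal{J}(\alpha+k,\gamma+k,\dots;\beta,\delta,\dots)$ is a formal identity in its parameters, valid over any commutative ring containing the relevant coefficients, and so may be specialised at $k=2y+1\in\mathbb{Q}[y]$; likewise the scaling step must be phrased as the formal substitution $x\mapsto\sqrt{ry(y+1)}\,x$, which after expansion involves only integer powers of $ry(y+1)$ precisely because the $\alpha$-parameters in play are zero. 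Everything else is routine bookkeeping with continued fractions.
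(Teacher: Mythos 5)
Your proposal is correct and follows essentially the same route as the paper: the paper rewrites the reversed bivariate generating function as $e^{(2y+1)x}e^{ry(y+1)x^2/2}$, recalls the continued fraction for the aerated double factorials generated by $e^{x^2/2}$, and invokes the scaling and binomial-transform rules for Jacobi continued fractions from \cite{CFT} exactly as you do. Your additional care about the formal substitution $k=2y+1$ and the square-root scaling is a reasonable elaboration of what the paper leaves implicit, but it is not a different argument.
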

This is a Jacobi continued fraction, which we can write as
$$\mathcal{J}(2y+1,2y+1,2y+1,\ldots; ry(y+1), 2ry(y+1),3ry(y+1),\ldots).$$
We then have the following result.
\begin{proposition} The $\gamma$-matrix, the $h$-matrix, and the $f$-matrix of the Pascal-like matrix
$$\left[e^x, x(1+rx/2)\right]$$ have their ordinary generating functions given by, respectively,
$$\mathcal{J}(1,1,1,\ldots, ry, 2ry, 3ry,\ldots),$$
$$\mathcal{J}(y+1, y+1,y+1,\ldots; ry, 2ry, 3ry,\ldots),$$
and
$$\mathcal{J}(2y+1,2y+1,2y+1,\ldots; ry(y+1), 2ry(y+1),3ry(y+1),\ldots).$$
\end{proposition}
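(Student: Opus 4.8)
The plan is to reduce all three assertions to a single lemma that turns exponential generating functions of a special shape into ordinary ones, and then to supply the three relevant exponential generating functions. The lemma I would isolate is the following: if polynomials $p_n(y)$ satisfy $\sum_{n\ge 0}p_n(y)\frac{x^n}{n!}=e^{ax}e^{bx^2/2}$ for some $a,b$ in the coefficient ring, then $\sum_{n\ge 0}p_n(y)x^n=\mathcal{J}(a,a,a,\ldots;b,2b,3b,\ldots)$. Granting this, the $f$-matrix case of the proposition is precisely the proposition stated just above it (take $a=2y+1$, $b=ry(y+1)$); the $h$-matrix case requires only the observation that the bivariate generating function $e^xe^{yx(1+rx/2)}$ of $\left[e^x,x(1+rx/2)\right]$ rearranges as $e^{(y+1)x}e^{ryx^2/2}$; and the $\gamma$-matrix case requires the analogous fact that the $\gamma$-matrix has bivariate generating function $e^xe^{ryx^2/2}$.

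To prove the lemma I would build up from the base case $a=0$, $b=1$, which is exactly the observation recalled above that $e^{x^2/2}$ is the exponential generating function of the aerated double factorials $1,0,1,0,3,0,15,\ldots$, whose ordinary generating function is $\mathcal{J}(0,0,\ldots;1,2,3,\ldots)$. Replacing $x$ by $\sqrt{b}\,x$ rescales the nonzero (even-index) coefficients by powers of $b$, so $e^{bx^2/2}$ corresponds to $\mathcal{J}(0,0,\ldots;b,2b,3b,\ldots)$; no square roots survive in the resulting series since only even powers of $x$ occur, and on a Jacobi fraction the substitution $x\mapsto\lambda x$ just sends $\alpha_i\mapsto\lambda\alpha_i$ and $\beta_i\mapsto\lambda^2\beta_i$. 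Finally, multiplying the exponential generating function by $e^{ax}$ applies the $a$-th binomial transform to the coefficient sequence, which by the rule of \cite{CFT} quoted in Section 2 adds $a$ to each of the entries $\alpha,\gamma,\ldots$, turning $\mathcal{J}(0,0,\ldots;b,2b,\ldots)$ into $\mathcal{J}(a,a,\ldots;b,2b,\ldots)$. That transform rule is a formal identity in the shift parameter, so it remains valid when $a$ is a polynomial in the indeterminate $y$.

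It then remains to write each of the three generating functions in the form $e^{ax}e^{bx^2/2}$. For $h$: $e^xe^{yx(1+rx/2)}=e^{x+xy+ryx^2/2}=e^{(y+1)x}e^{ryx^2/2}$, so $a=y+1$, $b=ry$, giving $\mathcal{J}(y+1,y+1,\ldots;ry,2ry,3ry,\ldots)$. For $\gamma$: the bivariate generating function $e^xe^{ryx^2/2}$ can be quoted from \cite{gamma}, or recovered from the $h$-matrix through the $\gamma$-expansion $h_n(t)=\sum_k\gamma_{n,k}t^k(1+t)^{n-2k}=(1+t)^ng_n\!\left(\frac{t}{(1+t)^2}\right)$, which on generating functions reads $\sum_n h_n(t)\frac{x^n}{n!}=G\!\left(x(1+t),\frac{t}{(1+t)^2}\right)$ for $G$ the $\gamma$-matrix exponential generating function; the change of variables $X=x(1+t)$, $u=\frac{t}{(1+t)^2}$ then carries $e^{(t+1)x}e^{rtx^2/2}$ to $e^Xe^{ruX^2/2}$ and identifies $G(x,u)=e^xe^{rux^2/2}$. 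Taking $a=1$, $b=ry$ gives $\mathcal{J}(1,1,\ldots;ry,2ry,3ry,\ldots)$. For $f$: the reversed array $eF_r$ has bivariate generating function $e^{(2y+1)x}e^{ry(y+1)x^2/2}$ (established in the paragraph preceding the previous proposition), so $a=2y+1$, $b=ry(y+1)$ reproduces $\mathcal{J}(2y+1,2y+1,\ldots;ry(y+1),2ry(y+1),3ry(y+1),\ldots)$. I would note explicitly that the $h$-matrix is palindromic, so its natural and reversed forms agree, while the $\gamma$-matrix is used in natural form and the $f$-matrix in reversed form; this is what lines the three fractions up into the displayed chain.

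The exponent rearrangements and coefficient bookkeeping are mechanical. The step I expect to be the genuine obstacle, and would write out most carefully, is the lemma itself: checking that the scaling $x\mapsto\sqrt{b}\,x$ and the binomial-transform rule of \cite{CFT} compose correctly and --- crucially --- remain valid when $a$ and $b$ are polynomials in the indeterminate $y$ rather than numerical constants, so that the resulting Jacobi continued fraction is a bona fide identity of formal power series in $x$ and $y$.
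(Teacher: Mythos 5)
Your proposal is correct and follows essentially the same route as the paper: the paper likewise derives the $h$-case by rewriting the bivariate generating function as $e^{(y+1)x}e^{ryx^2/2}$ and invoking the binomial-transform rule for Jacobi fractions from \cite{CFT}, cites \cite{gamma} for the $\gamma$-case, and obtains the $f$-case from the preceding proposition, which rests on the same aerated-double-factorial continued fraction. Your only departures are presentational --- packaging the mechanism as a single reusable lemma with the $x\mapsto\sqrt{b}\,x$ scaling spelled out, and making explicit which of the three matrices is taken in reversed form --- and these are consistent with (indeed slightly more careful than) the paper's argument.
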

\begin{proof}
The $h$-matrix in question is the Pascal-like matrix $\left[e^x, x(1+rx/2)\right]$ itself. This has bivariate generating function $$e^x e^{yx(1+rx/2)}=e^{(y+1)x}e^{rx^2/2}.$$ This is the $(y+1)$-st binomial transform of the sequence with generating function $e^{rx^2/2}$, whence the assertion concerning the $h$-matrix. The statement regarding the $\gamma$-matrix is proven in \cite{gamma}.
\end{proof}

\section{Remarks on the associahedron and the permutahedron}

It can be shown that the $\gamma$-matrix, the $h$-matrix and the $f$-matrix for the associahedron of type A (which are \seqnum{A055151}, \seqnum{A001263} and \seqnum{A033282}, respectively) have the following ordinary generating functions:
$$\mathcal{J}(1,1,1,\ldots; y,y,y,\ldots),$$
$$\mathcal{J}(y+1,y+1,y+1,\ldots; y,y,y,\ldots),$$ and
$$\mathcal{J}(2y+1,2y+1,2y+1,\ldots; y(y+1), y(y+1),y(y+1),\ldots).$$

In like manner, we can show that the $\gamma$-matrix, the $h$-matrix and the $f$-matrix for the permutahedron (which are \seqnum{A101280}, \seqnum{A008292}, \seqnum{A019538}, respectively \cite{Fomin, Petersen}) have the following ordinary generating functions:
$$\mathcal{J}(1,2,3,\ldots; 2y,6y,12y,\ldots),$$
$$\mathcal{J}(y+1,2(y+1),3(y+1),\ldots; 2y,6y,12y,\ldots),$$ and
$$\mathcal{J}(2y+1,2(2y+1),3(2y+1),\ldots; 2y(y+1), 6y(y+1),12y(y+1),\ldots).$$

We see that the assignment 
$$\mathcal{J}(\alpha, \beta, \gamma,\ldots;a,b,c,\ldots) \mapsto \mathcal{J}(\alpha, 2\beta, 3\gamma,\ldots;2a,6b,12c,\ldots)$$ provides us with a transfer mechanism between the associahedron and related objects to the permutahedron and associated objects.

\section{Conclusion} In this note we have shown how the face-vectors matrix of the hypercube and the $n$-simplex can be generalized to a generalized ``f-matrix'' for Pascal-like matrices that are defined by ordinary and exponential generating functions, respectively. To each such Pascal-like matrix there is an associated $\gamma$-matrix and and $f$-matrix. The bivariate generating functions are related in a specific and simple pattern. This pattern carries over to the associahedron and the permutahedron, and indeed, to other polytopes. It would appear useful to consider generalized Riordan arrays \cite{Wang} as a context for these cases.

\bigskip
\hrule
\bigskip
\noindent 2010 {\it Mathematics Subject Classification}: Primary
11B83; Secondary 33C45, 42C-5, 15B36, 15B05, 14N10, 11C20.
\noindent \emph{Keywords:} Face vector, Pascal-like triangle, Riordan array, Narayana number, Eulerian number, associahedron, permutahedron.

\bigskip
\hrule
\bigskip
\noindent (Concerned with sequences
\seqnum{A001147},
\seqnum{A001263},
\seqnum{A007318},
\seqnum{A019538},
\seqnum{A033282},
\seqnum{A038207},
\seqnum{A055151},
\seqnum{A074909},
\seqnum{A101280}, and
\seqnum{A135278}.)


\begin{thebibliography}{99}

\bibitem{gamma} P. Barry, The $\gamma$-vectors of Pascal-like triangles defined by Riordan arrays, arXiv:1804.05027 [math.CO].

\bibitem{Book} P. Barry, \emph{Riordan Arrays: A Primer}, Logic Press, 2017.

\bibitem{CFT} P. Barry, Continued fractions and
transformations of integer sequences, \emph{J. Integer Seq.}, \textbf{12} (2009),
\href{https://cs.uwaterloo.ca/journals/JIS/VOL12/Barry3/barry93.pdf} {Article 09.7.6}.

\bibitem{Exp} P. Barry, On a family of generalized Pascal triangles defined by exponential Riordan array,
    \emph{J.
    Integer Seq.}, \textbf{10} (2007),
    \href{http://www.cs.uwaterloo.ca/journals/JIS/VOL10/Barry/barry202.html}{Article 07.3.5}.

\bibitem{Cons} P. Barry, On integer-sequence-based constructions of
generalized Pascal triangles, \emph{J. Integer Seq.}, \textbf{9} (2006),
\href{https://cs.uwaterloo.ca/journals/JIS/VOL9/Barry/barry91.pdf}{Article 06.2.4}.


\bibitem{Riordan_Exp} E. Deutsch and L. Shapiro, Exponential Riordan arrays, Lecture Notes, Nankai University, 2004,
available electronically at \newline
\href{http://www.combinatorics.net/ppt2004/Louis%20W.%20Shapiro/shapiro.htm}{
    http://www.combinatorics.net/ppt2004/Louis\%20W.\%20Shapiro/shapiro.htm}.

\bibitem{Fomin} S. Fomin and N. Reading, Root systems and
generalized associahedra, arXiv:0505518v3 [math.CO].

\bibitem{Petersen} K. Petersen, \emph{Eulerian Numbers}, Birkh\"auser, 2015.

\bibitem{Survey} L. Shapiro, A survey of the Riordan group, available electronically at
\href{http://www.combinatorics.cn/activities/Riordan\%20Group.pdf} {Center for Combinatorics}, Nankai University, 2018.

\bibitem{SGWW} L. W. Shapiro, S. Getu, W.-J. Woan, and L. C.
    Woodson,
The Riordan group, \emph{Discr. Appl. Math.} \textbf{34}
(1991), 229--239.


\bibitem{SL1} N. J. A.~Sloane, \emph{The
On-Line Encyclopedia of Integer Sequences}. Published electronically
at \href{http://oeis.org}{http://oeis.org}, 2018.

\bibitem{SL2} N. J. A.~Sloane, The On-Line Encyclopedia of Integer
Sequences, \emph{Notices Amer. Math. Soc.}, \textbf{50} (2003),  912--915.

\bibitem{Stanley} R. P. Stanley, $f$-vectors and $h$-vectors of simplicial posets, \emph{J. Pure Appl. Algebra}, \textbf{71} (1991), 319--331.

\bibitem{Wall} H.~S. Wall, \emph{Analytic Theory of
    Continued Fractions}, AMS Chelsea Publishing, 2001.

\bibitem{Wang} W. Wang and T. Wang, Generalized Riordan arrays,\emph{Discrete Math.}, \textbf{308} (2008), 6466--6500.
    
\bibitem{Ziegler} G. M. Ziegler,  \emph{Lectures on Polytopes}, Springer-Verlag, 1995.

\end{thebibliography}
\end{document}